\newcommand{\envelope}{(\kern1pt\Letter\kern1pt)}
\newcommand{\setof}[1]{\left\{{#1}\right\}}
\newcommand{\cset}[2]{\setof{#1\,:\,#2}}
\newcommand{\ivcc}[2]{\left[#1,#2\right]}
\newcommand{\ivco}[2]{\left[#1,#2\right)}
\newcommand{\ivoc}[2]{\left(#1,#2\right]}
\newcommand{\abs}[1]{\left|#1\right|}
\newcommand{\norm}[1]{\left\Vert#1\right\Vert}
\newcommand{\normi}[1]{\left\Vert#1\right\Vert_\infty}
\newcommand{\normop}[1]{\left\Vert#1\right\Vert_{op}}
\newcommand{\spacecf}{C(\ivcc{0}{1})}
\newcommand{\spacedf}[1]{C^{{#1}}(\ivcc{0}{1})}
\newcommand{\spacepp}{\mathcal{S}(\Delta_n, k)}
\newcommand{\spaceppk}[1]{\mathcal{S}(\Delta_n, {#1})}
\newcommand{\ballo}[2]{B({#1},{#2})}
\newcommand{\ballc}[2]{\overline{\ballo{#1}{#2}}}
\newcommand{\uballo}{\ballo{0}{1}}
\newcommand{\uballc}{\ballc{0}{1}}
\newcommand{\Nn}{{\mathbb N}}
\newcommand{\Rr}{{\mathbb R}}
\newcommand{\Cc}{{\mathbb C}}
\newcommand{\Sdk}{S_{{\Delta_n}, k}\,}
\newcommand{\Sdkp}[1]{S^{#1}_{{\Delta_n}, k}\,}
\newcommand{\Sdkpm}{\Sdkp{m}}
\newcommand{\bspk}[1]{N_{{#1}, k}}
\newcommand{\bspkm}[2]{N_{{#1}, {k-#2}}}
\newcommand{\meshmin}{\delta_{\mathrm{min}}}
\newcommand{\meshmax}{\delta_{\mathrm{max}}}
\DeclareMathOperator{\im}{Im}
\journalname{Constructive Approximation}
\begin{document}

\title{Lower bounds for the approximation with variation-diminishing splines}
\author{J. Nagler \and P. Cerejeiras \and B. Forster}

\institute{
  J. Nagler \envelope \and B. Forster \at 
  Fakult\"at f\"ur Informatik und Mathematik, Universit\"at Passau, Germany\\
  \email{johannes.nagler@uni-passau.de}\\
  \email{brigitte.forster@uni-passau.de}
  \and
  P. Cerejeiras \at 
  Department of Mathematics, University of Aveiro, Portugal\\
  \email{pceres@ua.pt}
}

\date{Preprint, February 10, 2014}

\maketitle

\begin{abstract}
  We prove lower bounds for the approximation error of the variation-diminishing 
  Schoenberg operator on the interval $\ivcc{0}{1}$ in terms of classical moduli
  of smoothness depending on the degree of the spline basis using a
  functional analysis based framework.  Thereby, we characterize the
  spectrum of the Schoenberg operator and investigate the asymptotic
  behavior of its iterates. Finally, we prove the equivalence between
  the approximation error and the
  classical second order modulus of smoothness as an improved version of an open conjecture from 2002.
  \keywords{Schoenberg operator \and inverse theorem \and iterates \and spectral theory}
\end{abstract}

\section{Introduction}
Schoenberg introduced the variation-diminishing splines already in
1946 as part of a natural extension of the classical Bernstein
polynomials. Even though, his ideas have first been published
20 years later in the well known article of \citet{Curry:1966}. Since
then, they have attracted the interest of the scientific community due
to their good properties and vast range of applications. An
comprehensive overview on the theory of splines can be found in the
books of \citet{deBoor:1987, nuernberger1989}, and \citet{schumaker2007}.

In 2002, L. Beutel and her coauthors gave in the article \cite{Beutel:2002} a short survey
on the history, developments and contributions in this area and
investigated quantitative direct approximation inequalities for the Schoenberg operator. 
More importantly, the authors stated an interesting conjecture regarding
the equivalence of the approximation error of the Schoenberg operator
on $\ivcc{0}{1}$ and the second order Ditzian-Totik modulus of
smoothness.

We show an even stronger result, namely the equivalence with the
classical second order modulus of smoothness. Thereby, we first
characterize the asymptotic behavior of the iterates of the Schoenberg
operator. Afterwards, we use this result in order to prove a lower
bound of the approximation error with respect to the second order
modulus of smoothness.

The convergence of the iterates of the Schoenberg operator to the
operator of linear interpolation at the endpoints of the interval
$\ivcc{0}{1}$ can be also seen by the results of the article of
\citet{gavrea2011}. However, while their methods ensure the uniform
convergence of those iterates, they do not give the rate of
convergence in which in fact we are interested.  Therefore, our
approach uses an earlier result of C. Badea \cite{Badea:2009}, where the
asymptotic behavior of the iterates is characterized by their spectral
properties. Moreover, these results provide a simple and elegant
generalization of the results of \citet{nagler2013} to the non-uniform
case by using a functional analysis based framework, which we believe
is beautiful on its own.

\subsection{The Schoenberg operator}
Let $n > 0, k > 0$ be integers and let $\Delta_n = \{x_j\}_{j=-k}^{n+k}$ be a partition of $\ivcc{0}{1}$ satisfying
\begin{equation*}
  x_{-k} = \cdots = x_0 = 0 < x_1 < \ldots < x_n = \cdots = x_{n+k} = 1.
\end{equation*}
Throughout this paper, we will consider the Banach space $\spacecf$, the space of real-valued continuous functions on the interval $\ivcc{0}{1}$ endowed with the uniform norm $\normi{\cdot}$,
\begin{equation*}
  \norm{f}_\infty = \sup\cset{\abs{f(x)}}{x \in \ivcc{0}{1}}, \qquad f\in \spacecf.
\end{equation*}

The variation-diminishing spline operator $\Sdk:\spacecf \to \spacecf$ 
of degree $k$ with respect to the knot sequence $\Delta_n$ is defined for $f \in \spacecf$ by
\begin{align*}
  \Sdk f(x) &= \sum_{j=-k}^{n-1}f(\xi_{j,k})\bspk{j}(x),\quad 0 \leq x < 1,\\
  \Sdk f(1) &= \lim_{y \nearrow 1} \Sdk f(y)
\end{align*}
with the so called Greville nodes 
\begin{equation*}
  \xi_{j,k} := \frac{x_{j+1} + \cdots + x_{j+k}}{k},\quad -k \leq j \leq n-1,
\end{equation*}
and the normalized B-splines 
\begin{equation*}
  \bspk{j}(x) := (x_{j+k+1} - x_j)[x_j,\ldots,x_{j+k+1}](\cdot - x)_+^k.
\end{equation*}
Here, the divided difference $[x_j,\ldots,x_{j+k+1}]f$ for $f \in \spacecf$ is defined to be the coefficient
of $x^k$ in the unique polynomial of degree $k$ or less that interpolates the function $f$ 
at the points $x_j,\ldots,x_{j+k+1}$. We denote by $x^k_+$ the truncated power function of degree $k$,
defined for $x \in \Rr$ by
\begin{equation*}
  x^k_{+} =
  \begin{cases}
    x^k, & \text{for }x > 0,\\
    0, & \text{for } x \leq 0.
  \end{cases}
\end{equation*}

The operator $\Sdk$ was first discussed by Schoenberg and Curry in 1966
 as a generalization of the Bernstein operator see, e.g., \cite{Curry:1966,Marsden:1970}.
The normalized B-splines form a partition of unity
\begin{equation}
  \label{eq:partition_unity}
  \sum_{j=-k}^{n-1}\bspk{j}(x) = 1,
\end{equation}
and the Schoenberg operator reproduces linear functions, i.e.,
\begin{equation}
  \label{eq:reproduce_linear}
  \sum_{j=-k}^{n-1}\xi_{j,k}\bspk{j}(x) = x,
\end{equation}
due to the chosen Greville nodes. A comprehensive overview of
direct approximation inequalities for this operator can be found in
\cite{Beutel:2002}.

\subsection{Notation}
We denote the space of bounded linear operators on $\spacecf$ by $\mathcal{B}(\spacecf)$ equipped with the usual operator norm $\normop{\cdot}$. With $I$ we denote the identity operator on $\mathcal{B}(\spacecf)$. For $T \in \mathcal{B}(\spacecf)$, we denote by $\sigma(T)$ the spectrum of $T$, 
\begin{equation*}
  \sigma(T) = \cset{\lambda \in \Cc}{T - \lambda I \text{ is not invertible}}.
\end{equation*}
By $\sigma_p(T)$, we denote the point spectrum of $T$, 
\begin{equation*}
  \sigma_p(T) = \cset{\lambda \in \Cc}{T - \lambda I \text{ is not one-to-one}},
\end{equation*}
which contains all the eigenvalues of $T$.
We denote by $\spacepp$ the spline space of degree $k$ with respect to the knot sequence $\Delta_n$,
\begin{equation*}
  \spacepp = \cset{\sum_{j=-k}^{n-1}c_j \bspk{j}}{c_j \in \Rr,\ j \in \setof{-k,\ldots,n-1}} \subset \spacedf{k-1}.
\end{equation*}
Since $\spacepp$ is an $n+k$-dimensional subspace of $\spacecf$,
$\spacepp$ is a Banach space with the inherited norm $\normi{\cdot}$.
For more information on spline spaces see, e.g., \cite{deBoor:1987, nuernberger1989, schumaker2007}.
The open ball of radius $r > 0$ at the point $z \in \Cc$ in the
complex plane will be denoted by $\ballo{z}{r} := \cset{\lambda \in
  \Cc}{\abs{\lambda - z} < r}$ and its closure by $\ballc{z}{r}$.

\section{The spectrum of the Schoenberg operator}
We investigate some basic properties of the Schoenberg operator needed in order to prove our main results, and that are of interest of their own.
The following fact can, e.g., be found in \cite{deBoor:1973}.
\begin{theorem}
  \label{thm:bounded}
  The Schoenberg operator $\Sdk:\spacecf \to \spacecf$ is bounded and $\normop{\Sdk} = 1$.
\end{theorem}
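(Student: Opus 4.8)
The plan is to establish the two claims separately: an upper bound $\normop{\Sdk} \le 1$ and a matching lower bound $\normop{\Sdk} \ge 1$. Recall that for $f \in \spacecf$ the operator norm is $\normop{\Sdk} = \sup_{\normi{f} \le 1} \normi{\Sdk f}$.

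First I would prove the upper bound, which relies on the nonnegativity of the B-splines $\bspk{j}$ together with the partition of unity \eqref{eq:partition_unity}. The B-splines are nonnegative everywhere (this is a standard property following from their definition via divided differences of the truncated power function), so for $0 \le x < 1$ we may estimate
\begin{equation*}
  \abs{\Sdk f(x)} = \abs[\Big]{\sum_{j=-k}^{n-1} f(\xi_{j,k}) \bspk{j}(x)} \le \sum_{j=-k}^{n-1} \abs{f(\xi_{j,k})} \bspk{j}(x) \le \normi{f} \sum_{j=-k}^{n-1} \bspk{j}(x) = \normi{f}.
\end{equation*}
Here the Greville nodes $\xi_{j,k}$ lie in $\ivcc{0}{1}$, so $\abs{f(\xi_{j,k})} \le \normi{f}$. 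The estimate at $x = 1$ follows by taking the limit $y \nearrow 1$ and using continuity, so that $\normi{\Sdk f} \le \normi{f}$ and hence $\normop{\Sdk} \le 1$.

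For the lower bound I would simply test the operator on the constant function $\mathbf{1}$, where $\mathbf{1}(x) = 1$ for all $x \in \ivcc{0}{1}$. By the partition of unity \eqref{eq:partition_unity} we have $\Sdk \mathbf{1} = \mathbf{1}$, so $\normi{\Sdk \mathbf{1}} = \normi{\mathbf{1}} = 1$, which forces $\normop{\Sdk} \ge 1$. (Equivalently, one may invoke the reproduction of linear functions \eqref{eq:reproduce_linear}, but the constant function is the most economical witness.) Combining the two inequalities yields $\normop{\Sdk} = 1$.

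The main obstacle — really the only nontrivial input — is the nonnegativity of the normalized B-splines, since this is what converts the triangle inequality into the clean bound by $\normi{f}$. This is a classical fact, but if one wanted a self-contained argument it would require either the recurrence relation for B-splines or a direct sign analysis of the divided difference $[x_j, \ldots, x_{j+k+1}](\cdot - x)_+^k$; since the statement is attributed to \cite{deBoor:1973}, I would cite it rather than reprove it. Everything else is a routine application of the partition of unity property, so the argument is short and the only care needed is handling the endpoint $x = 1$ via the defining limit.
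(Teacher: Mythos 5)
Your proposal is correct and follows essentially the same route as the paper: the partition of unity \eqref{eq:partition_unity} gives the upper bound $\normop{\Sdk} \leq 1$, and the constant function $1$ serves as the witness for equality. If anything, your version is slightly more careful than the paper's, since you make explicit the nonnegativity of the B-splines (which the paper's one-line norm estimate uses implicitly) and you handle the endpoint $x=1$ via the defining limit.
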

\begin{proof}
  Let $f \in \spacecf$ with $\normi{f} = 1$. Then
  \begin{equation*}
    \normi{\Sdk f} = \normi{\sum_{j=-k}^{n-1}f(\xi_{j,k})\bspk{j}(x)}
    \leq \normi{f}\cdot \normi{\sum_{j=-k}^{n-1}\bspk{j}(x)} = 1,
  \end{equation*}
  because of property \eqref{eq:partition_unity}. Therefore, $\norm{\Sdk} \leq 1$.
  By considering now the constant function $1 \in \spacecf$, we get
  $\norm{\Sdk 1}_\infty = 1$. Hence, the operator has norm $1$, $\normop{\Sdk} = 1$.
\end{proof}

Due to the finite-dimensional image of $\Sdk$, we can directly obtain the compactness of the Schoenberg operator.
\begin{theorem}
  \label{thm:compact}
  The Schoenberg operator $\Sdk:\spacecf \to \spacecf$ is compact and as such
  $\im(\Sdk - I)$ is closed. Besides, $1$ is not a cluster point of the spectrum $\sigma(\Sdk)$.
\end{theorem}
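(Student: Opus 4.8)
The plan is to deduce all three assertions from a single structural observation, namely that $\Sdk$ has finite-dimensional range, combined with the Riesz--Schauder theory of compact operators. First I would note that by the very definition of the operator, $\Sdk f$ is always a linear combination of the finitely many B-splines $\bspk{j}$, so that $\im(\Sdk) \subseteq \spacepp$. Since $\spacepp$ is an $(n+k)$-dimensional subspace of $\spacecf$, the operator $\Sdk$ has finite rank. Together with the boundedness established in Theorem~\ref{thm:bounded}, this yields compactness: the image of the closed unit ball is a bounded subset of the finite-dimensional normed space $\spacepp$, hence relatively compact by the Heine--Borel theorem. Thus $\Sdk$ maps bounded sets to relatively compact sets and is compact.

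For the closedness of $\im(\Sdk - I)$, I would invoke the Fredholm alternative (Riesz theory) for operators of the form $\lambda I - K$ with $K$ compact and $\lambda \neq 0$: such operators have finite-dimensional kernel and closed range of finite codimension. Writing $\Sdk - I = -(I - \Sdk)$ and applying this with $\lambda = 1 \neq 0$ and $K = \Sdk$, one obtains that $I - \Sdk$ has closed range, and closedness is of course unaffected by the overall sign. Hence $\im(\Sdk - I)$ is closed.

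Finally, the assertion that $1$ is not a cluster point of $\sigma(\Sdk)$ is exactly the spectral content of the Riesz--Schauder theorem for compact operators: the spectrum $\sigma(\Sdk)$ is at most countable, every nonzero spectral value is an isolated eigenvalue of finite algebraic multiplicity, and the only possible accumulation point of $\sigma(\Sdk)$ is $0$. Since $1 \neq 0$, it cannot be a cluster point of the spectrum. I expect no substantive obstacle in this proof, as the whole argument reduces to recognizing finite rank and then quoting standard theory; the only point demanding mild care is the bookkeeping of conventions (working with $I - \Sdk$ rather than $\Sdk - I$) when applying the Fredholm-alternative statement.
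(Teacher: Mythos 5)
Your proposal is correct and follows essentially the same route as the paper: compactness via finite rank (the image lies in the $(n+k)$-dimensional spline space $\spacepp$), and then the Riesz--Schauder theory of compact operators for both the closedness of $\im(\Sdk - I)$ and the fact that $0$ is the only possible accumulation point of the spectrum. The paper simply cites Theorem~4.25 of \cite{rudin1991} for these last two points, whereas you spell out the Heine--Borel and Fredholm-alternative details, but the argument is the same.
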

\begin{proof}
  From Theorem \ref{thm:bounded} it follows that the operator is bounded with $\normop{\Sdk} = 1$ and maps continuous
  functions to the spline space $\spacepp$. Therefore,  the operator has finite rank and finite rank operators are compact.
  For compact operators $\im(T - I)$ is closed and $0$ is the only possible cluster point of $\sigma(\Sdk)$, see \cite{rudin1991}, Theorem~4.25.
\end{proof}

The main result of this section is the following:
\begin{theorem}
  The spectrum of the Schoenberg operator consists only of the point spectrum and
  \begin{equation*}
    \sigma(\Sdk) \subset \uballo \cup \setof{1}.
  \end{equation*}
\end{theorem}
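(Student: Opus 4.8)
The plan is to combine the compactness from Theorem~\ref{thm:compact} with the norm bound from Theorem~\ref{thm:bounded} to confine the spectrum to the closed unit disk, and then to exclude every unimodular point except $1$ by a maximum principle. First I would establish that $\sigma(\Sdk) = \sigma_p(\Sdk)$: by the Riesz--Schauder theory for compact operators (Rudin, Theorem~4.25, already used in Theorem~\ref{thm:compact}) every nonzero spectral value is an eigenvalue, while $0$ is an eigenvalue because $\Sdk$ has finite rank and hence nontrivial kernel --- indeed any nonzero $f \in \spacecf$ that vanishes at the finitely many Greville nodes $\xi_{j,k}$ satisfies $\Sdk f = 0$. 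Since the spectral radius is bounded by $\normop{\Sdk} = 1$, this already gives $\sigma(\Sdk) \subset \uballc$, so it remains to show that no eigenvalue $\lambda$ with $\abs{\lambda} = 1$ and $\lambda \neq 1$ can occur.

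Suppose, for contradiction, that $\Sdk f = \lambda f$ with $f \neq 0$, $\normi{f} = 1$, $\abs{\lambda} = 1$ and $\lambda \neq 1$ (working in the complexification of $\Sdk$ so that $\lambda \in \Cc$ is admissible). The first ingredient is that $\Sdk$ interpolates at the endpoints: since $\bspk{j}(0) = \delta_{j,-k}$ and $\bspk{j}(1) = \delta_{j,n-1}$ while $\xi_{-k,k} = 0$ and $\xi_{n-1,k} = 1$, one gets $\Sdk f(0) = f(0)$ and $\Sdk f(1) = f(1)$; the eigenrelation then yields $(\lambda-1)f(0) = (\lambda-1)f(1) = 0$, hence $f(0) = f(1) = 0$ because $\lambda \neq 1$. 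The second ingredient is a maximum principle for $h := \abs{f}$: using nonnegativity of the B-splines and \eqref{eq:partition_unity},
\begin{equation*}
  h(x) = \abs{\lambda f(x)} = \abs{\Sdk f(x)} \leq \sum_{j=-k}^{n-1}\abs{f(\xi_{j,k})}\,\bspk{j}(x) = \Sdk h(x), \qquad x \in \ivcc{0}{1}.
\end{equation*}
Thus at any point $x_0$ with $h(x_0) = 1 = \normi{h}$ we have $1 = h(x_0) \leq \Sdk h(x_0) \leq 1$, forcing equality; as the weights $\bspk{j}(x_0)$ are nonnegative and sum to $1$, this forces $h(\xi_{j,k}) = 1$ for every index $j$ with $\bspk{j}(x_0) > 0$.

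The heart of the argument --- and the step I expect to be the main obstacle --- is to upgrade this local statement to a global contradiction by propagating the peak set to an endpoint. Call $\xi_{j,k}$ a \emph{peak node} if $h(\xi_{j,k}) = 1$; a peak node exists, since $h$ attains the value $1$ and the observation above manufactures peak nodes from any peak point. Let $j^*$ be the smallest index of a peak node. Applying the maximum principle at the peak point $x_0 = \xi_{j^*,k}$ and using that $\xi_{j^*,k}$, being the average of $x_{j^*+1},\dots,x_{j^*+k}$, lies in the open support $\ivoo{x_{j^*-1}}{x_{j^*+k}}$ of $\bspk{j^*-1}$ (so that $\bspk{j^*-1}(\xi_{j^*,k}) > 0$), one finds that $\xi_{j^*-1,k}$ is also a peak node, contradicting minimality unless $j^* = -k$. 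But the node of index $-k$ is $\xi_{-k,k} = 0$, where $h(0) = \abs{f(0)} = 0 \neq 1$, a contradiction. Ruling out $\lambda$ in this way, together with the disk bound, gives $\sigma(\Sdk) = \sigma_p(\Sdk) \subset \uballo \cup \setof{1}$. The delicate points I would have to treat carefully are the degenerate knot configurations near the boundary (and the linear case $k = 1$), which determine exactly which B-spline supports overlap and thereby guarantee that the leftward propagation genuinely reaches index $-k$.
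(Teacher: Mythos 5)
Your argument agrees with the paper's proof up to and including the reduction $\sigma(\Sdk)=\sigma_p(\Sdk)\subset\uballc$ (compactness, Riesz--Schauder, $0$ an eigenvalue via a function vanishing at the Greville nodes), but then it departs genuinely. The paper observes that any eigenfunction for $\lambda\neq 0$ must lie in $\spacepp$, converts $\Sdk s=\lambda s$ into an eigenvalue problem for the collocation matrix $N=\left(\bspk{j}(\xi_{i,k})\right)_{i,j}$, which is nonnegative with row sums $1$ by \eqref{eq:partition_unity}, and concludes by Gershgorin's circle theorem: each Gershgorin disk meets the unit circle only at $1$. Your route instead works analytically with positivity and endpoint interpolation: a unimodular eigenvalue $\lambda\neq1$ forces $f(0)=f(1)=0$, the pointwise bound $\abs{f}\leq\Sdk\abs{f}$ holds, and the peak set $\cset{j}{\abs{f(\xi_{j,k})}=\normi{f}}$ propagates leftwards through overlapping B-spline supports until it reaches an endpoint, where $\abs{f}$ vanishes. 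This is a legitimate alternative --- essentially a discrete maximum principle in place of Gershgorin --- and it explains structurally why no unimodular eigenvalue other than $1$ can occur. Note that both proofs ultimately rest on the same knot-multiplicity bookkeeping: the paper's Gershgorin disks pin unimodular points to $\setof{1}$ only because the diagonal entries $\bspk{j}(\xi_{j,k})$ are strictly positive, which is exactly the kind of support-overlap fact your propagation needs.

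There is, however, a genuine gap: your propagation step fails outright for $k=1$, a case the theorem covers since only $k>0$ is assumed. You need $\bspk{j^*-1}(\xi_{j^*,k})>0$, i.e.\ $\xi_{j^*,k}\in\ivoo{x_{j^*-1}}{x_{j^*+k}}$; but for $k=1$ the Greville nodes are the knots themselves, $\xi_{j,1}=x_{j+1}$, and $x_{j^*+1}$ is the \emph{right endpoint} of the support $\ivcc{x_{j^*-1}}{x_{j^*+1}}$ of the hat function $N_{j^*-1,1}$, so $N_{j^*-1,1}(\xi_{j^*,1})=0$: the peak set never moves and no contradiction arises. You flagged this case but left it open; it requires a separate (easy) argument, e.g.\ for $k=1$ the operator interpolates at every knot, hence $\Sdkp{2}=\Sdk$ is a projection onto $\spaceppk{1}$ and $\sigma(\Sdk)\subset\setof{0,1}$. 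For $k\geq2$ your open-support claim is also not true for all $j^*$ as you assert it: it fails at $j^*=n-1$, where $\xi_{n-1,k}=1=x_{n-1+k}$ is the right endpoint of the support of $\bspk{n-2}$. You must rule out $j^*=n-1$ (as you did $j^*=-k$) before propagating, which is immediate from $\abs{f(1)}=0$, and then verify the two strict inclusions: $\xi_{j^*,k}<x_{j^*+k}$ holds because $x_{j^*+1}=\cdots=x_{j^*+k}$ would force $j^*\in\setof{-k,n-1}$, and $\xi_{j^*,k}>x_{j^*-1}$ holds because equality would force $k+2$ consecutive coincident knots, impossible since the endpoint multiplicities are $k+1$ and the interior knots are simple. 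With those repairs the $k\geq2$ case of your argument is sound.
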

\begin{proof}
  Since $\normop{\Sdk} = 1$, for $\lambda \in \sigma(\Sdk)$ the inequality
  \begin{equation*}
    \abs{\lambda} \leq \normop{\Sdk} = 1
  \end{equation*}
  holds.  Therefore, $\sigma(\Sdk) \subset \uballc$. 

  In the following, we show that $\sigma(\Sdk) \subset \uballo \cup \setof{1}$, i.e., if $\lambda \in \sigma(\Sdk)$ with 
  $\abs{\lambda} = 1$ then $\lambda = 1$. First, we prove that $0 \in \sigma_p(\Sdk)$. Then, we will show that $1 \in \sigma_p(\Sdk)$.
  Finally, we consider eigenvalues $\lambda \in \sigma_p(\Sdk) \setminus \setof{0,1}$ and we show that then $\abs{\lambda} < 1$ holds.
  
  \begin{description}
    \item \emph{Step 1}: We show that $0 \in \sigma_p(\Sdk)$. Let $f \in \spacecf$ a
    function, such that
    \begin{equation*}
      f(\xi_j) = 0 \qquad \text{for all } j \in \setof{-k, \ldots, n-1} 
    \end{equation*}
    and such that there exists $x \in \ivcc{0}{1} \setminus \cset{\xi_j}{j \in
      \setof{-k,\ldots,n-1}}$ with $f(x) \neq 0$.  For example,
    consider the polynomial $f(x) =
    \prod_{i=-k}^{n-1}(x-\xi_i)$. Clearly, $f \in \spacecf$ and we
    obtain $\Sdk f = 0 \cdot f = 0$, because for all $x \in \ivcc{0}{1}$
    \begin{equation*}
      \Sdk f(x) = \sum_{j=-k}^{n-1}\left[\prod_{i=-k}^{n-1}(\xi_j-\xi_i)\right]\bspk{j}(x) = 0.
    \end{equation*}
    For compact operators, it is known that every $\lambda \neq 0$ in the spectrum
    is contained in the point spectrum of the operator. This classical result 
    is stated, e.g., in  Theorem~4.25 by \citet{rudin1991}.

    As $0 \in \sigma_p(\Sdk)$, it follows that
    \begin{equation*}
      \sigma(\Sdk) = \sigma_p(\Sdk).
    \end{equation*}

    \item \emph{Step 2}: We have $1 \in \sigma(\Sdk)$, because of the properties
    \eqref{eq:partition_unity} and \eqref{eq:reproduce_linear} and the
    function $f(x) = 1$ and $f(x) = x$ are eigenfunctions of $\Sdk$
    corresponding to the eigenvalue $1$.

    \item \emph{Step 3}: Now we prove that for all the other eigenvalues $\lambda \in
    \sigma(\Sdk)$, we have
    \begin{equation*}
      \abs{\lambda} < 1.
    \end{equation*}
    Let $\lambda \in \sigma(\Sdk) \setminus \setof{0}$. As the
    operator maps continuous functions to the spline space, the
    eigenfunctions have to be spline functions as well.  Let $s \in
    \spacepp$, $s = \sum_{j=-k}^{n-1}c_j \bspk{j}$, be such an
    eigenfunction for the eigenvalue $\lambda \in \Cc$.  Then
    \begin{alignat*}{2}
      && & \Sdk s = \lambda s\\
      &&\Longleftrightarrow\qquad &
      \sum_{i=-k}^{n-1}\sum_{j=-k}^{n-1}c_j\bspk{j}(\xi_{i})\bspk{i}(x)
      = \lambda \sum_{i=-k}^{n-1}c_i \bspk{i}(x)\\
      &&\Longleftrightarrow\qquad &
      \sum_{i=-k}^{n-1}\left[\sum_{j=-k}^{n-1}c_j\bspk{j}(\xi_{i}) -
        \lambda c_i\right]\bspk{i}(x)
      = 0\\
      &&\Longleftrightarrow\qquad &
      \sum_{j=-k}^{n-1}c_j\bspk{j}(\xi_{i}) = \lambda c_i.
    \end{alignat*}
    Thus, $\lambda \neq 0$ is an eigenvalue of the operator $\Sdk$, if
    and only if $\lambda$ is an eigenvalue of the matrix $N \in
    \Rr^{(n+k) \times (n+k)}$,
    \begin{equation*}
      N =
      \begin{pmatrix}
        N_{-k}(\xi_{-k}) & N_{1-k}(\xi_{-k}) & \cdots & N_{n-1}(\xi_{-k})\\
        N_{-k}(\xi_{1-k}) & N_{1-k}(\xi_{1-k}) & \cdots & N_{n-1}(\xi_{1-k})\\
        \vdots & & &\\
        N_{-k}(\xi_{n-1}) & N_{1-k}(\xi_{n-1}) & \cdots &
        N_{n-1}(\xi_{n-1})
      \end{pmatrix}.
    \end{equation*}
    This matrix $N$ is non-negative as $N_j \geq 0$ and every row sums
    up to one because of property \eqref{eq:partition_unity}.  By the
    Theorem of Gershgorin \cite{Gershgorin:1931}, we have that
    the eigenvalues are contained in the union of circles
    \begin{equation*}
      \lambda \in \bigcup_{j=-k}^{n-1} D_j,
    \end{equation*}
    with 
    \begin{equation*}
      D_j = \cset{\lambda \in \Cc}{\abs{\lambda - \bspk{j}(\xi_j)}
        \leq \sum_{i=-k, i\neq j}^{n-1} \bspk{j}(\xi_i)}.
    \end{equation*}
    Using property \eqref{eq:partition_unity}, it follows that
    \begin{equation*}
      \bigcup_{j=-k}^{n-1} D_j \cap \cset{\lambda \in \Cc}{\abs{\lambda} = 1}   = \setof{1}.
    \end{equation*}  
    Finally, we obtain $\sigma_p(\Sdk) = \sigma(\Sdk) \subset \uballo \cup \setof{1}$.
  \end{description}
\end{proof}

\section{Main Results}
We investigate the iterates $\Sdkpm$ of the Schoenberg operator for $m \to \infty$ and prove a lower bound.
\subsection{The limit of the  iterates of the Schoenberg operator}
We show that the iterates of the Schoenberg operator converge in the limit to a linear operator $L$.
Concretely, we define the iterates by $\Sdkp{0} = I$ and for $m \in \Nn$ by
\begin{equation*}
  \Sdkpm f(x) = \Sdkp{m-1}(\Sdk f)(x).
\end{equation*}
We will show
\begin{equation*}
\lim_{m \to \infty}\normop{\Sdkpm - L} = 0,
\end{equation*}
where $L$ is defined for $f \in \spacecf$ by 
\begin{equation*}
(Lf)(x) = f(0) + \big(f(1) - f(0)\big)\cdot x.
\end{equation*}

In \cite{Badea:2009} it has been shown that operators of a certain structure converge to this linear operator $L$.
In fact, the Schoenberg operator $\Sdk: \spacecf \to \spacecf$ fulfills the following required properties:
\begin{itemize}
\item The operator $\Sdk$ is bounded and $\im(\Sdk - I)$ is closed,
\item $\ker(\Sdk - I) = \mathrm{span}(1, x)$, i.e., the Schoenberg operator reproduces constant and linear functions,
\item $\Sdk f(0) = f(0)$ and $\Sdk f(1) = f(1)$ for every $f \in \spacecf$, i.e., the Schoenberg operator interpolates start and end points,
\item $\sigma(\Sdk) \subset \uballo \cup \setof{1}$, and finally,
\item $1$ is not a cluster point\footnote{This condition was not
    contained in the work of \citet{Badea:2009}, but was needed in our
    proof for the convergence of the iterates. To the best of our
    knowledge it is an open question whether this condition is also
    necessary in the proof for general continuous linear operators as stated in
    Theorem~2.1 and Theorem~2.2 in \cite{Badea:2009}. Anyhow, both
    Theorems hold true for compact operators as in our case.} of
  $\sigma(\Sdk)$, i.e,
  \begin{equation*}
    \sup\cset{\abs{\lambda}}{\lambda \in \sigma(\Sdk) \setminus \setof{1}} < 1.
  \end{equation*}
\end{itemize}
All these properties were deduced in the previous section. We can conclude:
\begin{theorem}
  \label{thm:iterates}
  With $\gamma_{{\Delta_n}, k} := \sup\cset{\lambda \in \Cc}{\lambda \in \sigma(\Sdk) \setminus \setof{1}}$, we obtain
  \begin{equation*}
    \normop{\Sdkpm - L} \leq C \cdot \gamma_{{\Delta_n}, k}^m
  \end{equation*}  
  for some suitable constant $1 \leq C \leq 1/(\gamma_{{\Delta_n}, k})$ and therefore,
  \begin{equation*}
    \lim_{m \to \infty} \normop{\Sdkpm - L} = 0.
  \end{equation*}
\end{theorem}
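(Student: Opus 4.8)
The plan is to exhibit the limit operator $L$ as a bounded idempotent that reduces $\Sdk$ and to read off the rate from the location of the spectrum established in the previous section. I would first record three elementary identities. Since $Lf$ is affine and $L$ fixes affine functions, $L^2 = L$; since $\Sdk$ reproduces $1$ and $x$, $\Sdk L = L$; and since $(\Sdk f)(0) = f(0)$ and $(\Sdk f)(1) = f(1)$, also $L\Sdk = L$. Hence $L$ is a bounded projection (indeed $\normop{L} = 1$) commuting with $\Sdk$, and the complementary ranges $\im L = \mathrm{span}(1,x)$ and $\ker L = \cset{f \in \spacecf}{f(0) = f(1) = 0}$ are both $\Sdk$-invariant, the latter again because $\Sdk$ preserves endpoint values.

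Setting $N := \Sdk - L$, the identities above give $NL = LN = 0$, so in the expansion of $(L + N)^m$ every mixed term vanishes and
\begin{equation*}
  \Sdkpm = L + N^m, \qquad \text{whence} \qquad \Sdkpm - L = N^m .
\end{equation*}
The whole problem thus reduces to estimating $\normop{N^m}$. With respect to the topological direct sum $\spacecf = \im L \oplus \ker L$ the operator $N$ vanishes on $\im L$ and coincides with $T := \Sdk|_{\ker L}$ on $\ker L$, so $\sigma(N) = \setof{0} \cup \sigma(T)$.

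It remains to locate $\sigma(T)$. From the same block decomposition $\sigma(\Sdk) = \setof{1} \cup \sigma(T)$, and $1 \notin \sigma(T)$: as $T$ is compact its nonzero spectrum is point spectrum, and an eigenvalue $1$ of $T$ would produce an affine function vanishing at both endpoints, i.e.\ the zero function. Therefore $\sigma(T) = \sigma(\Sdk) \setminus \setof{1} \subset \uballo$ and the spectral radius of $N$ equals $\gamma_{{\Delta_n}, k} < 1$. It is worth noting that the interpolation identities hand us the reducing projection $L$ for free; this is exactly what one would otherwise have to establish as the semisimplicity of the eigenvalue $1$ (a putative rank-two generalized eigenvector $g$ would force $(\Sdk - I)g \in \mathrm{span}(1,x)$ to vanish at both endpoints and hence be $0$), so no Riesz projection need be constructed by hand.

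From $r(N) = \gamma_{{\Delta_n}, k} < 1$ the spectral radius formula already gives $\normop{N^m} \to 0$, which proves the limit. The step I expect to require the most care is the passage to the explicit geometric estimate $\normop{N^m} \le C\,\gamma_{{\Delta_n}, k}^m$ with $1 \le C \le 1/\gamma_{{\Delta_n}, k}$: here I would invoke the quantitative result of \citet{Badea:2009} (Theorems~2.1--2.2), whose hypotheses are precisely the five properties checked in the previous section; alternatively, since $N$ has finite rank, one can pass to the Jordan form of $N$ on its finite-dimensional range and extract the geometric decay together with the constant directly.
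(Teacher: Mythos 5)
Your argument is correct, but it takes a genuinely more self-contained route than the paper, whose entire proof is a single sentence: the five bulleted properties are handed to Theorem~2.1 of \citet{Badea:2009}, and the conclusion --- including the constant $1 \le C \le 1/\gamma_{{\Delta_n},k}$ --- is simply quoted. What you do instead (checking $L^2 = L$, $\Sdk L = L\Sdk = L$, deducing the key identity $\Sdkpm - L = (\Sdk - L)^m$, and locating $\sigma(\Sdk - L) = \setof{0} \cup \left(\sigma(\Sdk)\setminus\setof{1}\right)$ via the splitting $\spacecf = \im L \oplus \ker L$) is in effect the interior of Badea's theorem, reconstructed for this particular compact operator. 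This buys a citation-free proof of the qualitative statement $\normop{\Sdkpm - L} \to 0$: since $\sigma(\Sdk - L)$ is a compact subset of $\uballo$, the spectral radius of $N := \Sdk - L$ is strictly less than $1$, and the spectral radius formula gives geometric decay. Your observation that the interpolation identities force semisimplicity of the eigenvalue $1$ also makes transparent why the paper needs its footnoted cluster-point condition: it is exactly what guarantees $r(N) = \gamma_{{\Delta_n},k} < 1$.

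The one place to be careful is your proposed Jordan-form shortcut for the explicit constant. The spectral radius formula alone only yields $\normop{N^m} \le C_\varepsilon\,(\gamma_{{\Delta_n},k}+\varepsilon)^m$, and Jordan form only improves this to $C\,\gamma_{{\Delta_n},k}^m$ if every eigenvalue of modulus exactly $\gamma_{{\Delta_n},k}$ is semisimple; a Jordan block of size $d \ge 2$ at such a peripheral eigenvalue would make $\normop{N^m}$ grow like $m^{d-1}\gamma_{{\Delta_n},k}^m$, which is not $O(\gamma_{{\Delta_n},k}^m)$ at all, let alone with a constant bounded by $1/\gamma_{{\Delta_n},k}$. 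You established semisimplicity only for the eigenvalue $1$, not for the sub-dominant eigenvalues, so the clean bound with the stated constant does not follow from the Jordan alternative as written. Your primary route --- invoking Badea's quantitative theorem, whose hypotheses are the five properties verified in the paper --- is exactly the paper's proof and is what carries the constant $1 \le C \le 1/\gamma_{{\Delta_n},k}$.
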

\begin{proof}
  The result follows now immediately from Theorem 2.1 in \cite{Badea:2009} using the above mentioned properties of $\Sdk$.
\end{proof}

\subsection{A lower bound of the Schoenberg operator}
In this section, we show that for $r \in \Nn$, $r \geq 2$, $0 < t \leq \frac1{r}$ and $k > r$, there exists a constant $M > 0$, such that
\begin{equation*}
  M \cdot \omega_r(f,t) \leq \normi{f - \Sdk f}.
\end{equation*}
Here the $r$-th modulus of smoothness $\omega_r:  \spacecf \times \ivoc{0}{\frac{1}{r}} \to \ivco{0}{\infty}$ is defined by
\begin{equation*}
  \omega_r(f, t) := \sup_{0<h<t}\sup\cset{|\Delta_h^rf(x)|}{x \in \ivcc{0}{1-rh}},
\end{equation*}
with the forward difference operator
\begin{equation*}
  \Delta_h^kf(x) = \sum_{l=0}^{r}(-1)^{r-l} \binom{r}{l}f(x + lh).
\end{equation*}

The $r$-th modulus of smoothness satisfies the following properties \cite{zygmund2002,timan1994}:
\begin{lemma} Let $0 < t \leq \frac1{r}$ be fixed.
  \begin{enumerate}
  \item For $f_1, f_2 \in \spacecf$, the triangle inequality holds,
    \begin{equation}
      \label{eq:mos_triangle}
      \omega_r(f_1+f_2, t) \leq \omega_r(f_1, t) + \omega_r(f_2, t).
    \end{equation}
  \item If $f \in \spacecf$, then
    \begin{equation}
      \label{eq:mos_cf}
      \omega_r(f, t) \leq 2^r \normi{f}.
    \end{equation}
  \item If $f \in \spacedf{r}$, then
    \begin{equation}
      \label{eq:mos_cd}
      \omega_r(f, t) \leq t^r \normi{D^rf}.
    \end{equation}
  \end{enumerate}
\end{lemma}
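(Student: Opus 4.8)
The plan is to prove the three properties separately, in each case exploiting that the finite difference operator $\Delta_h^r$ is linear in $f$ and given by the explicit sum in the definition. For the triangle inequality \eqref{eq:mos_triangle} I would first note that linearity gives $\Delta_h^r(f_1+f_2)(x) = \Delta_h^r f_1(x) + \Delta_h^r f_2(x)$ pointwise, so that the ordinary triangle inequality for $\abs{\cdot}$ yields $\abs{\Delta_h^r(f_1+f_2)(x)} \leq \abs{\Delta_h^r f_1(x)} + \abs{\Delta_h^r f_2(x)}$ for every admissible $x \in \ivcc{0}{1-rh}$ and every $0 < h < t$. Taking first the supremum over $x$ and then over $h$, and using that the supremum of a sum is bounded by the sum of the suprema, gives the claim.

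For the bound \eqref{eq:mos_cf} I would estimate the defining sum directly. By the triangle inequality,
\[
  \abs{\Delta_h^r f(x)} \leq \sum_{l=0}^{r}\binom{r}{l}\abs{f(x+lh)} \leq \normi{f}\sum_{l=0}^{r}\binom{r}{l} = 2^r\normi{f},
\]
where the last equality is the binomial theorem at the value $1$. Since this estimate is uniform in $x$ and $h$, passing to the supremum over $x \in \ivcc{0}{1-rh}$ and $0 < h < t$ immediately gives $\omega_r(f,t) \leq 2^r\normi{f}$.

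The third property is the only one requiring genuine work, and I expect it to be the main obstacle. The key ingredient is the integral representation
\[
  \Delta_h^r f(x) = \int_0^h \cdots \int_0^h f^{(r)}\Big(x + \sum_{i=1}^{r} u_i\Big)\, du_1 \cdots du_r,
\]
valid for $f \in \spacedf{r}$. I would establish this by induction on $r$: the base case $r = 1$ is the fundamental theorem of calculus, $f(x+h) - f(x) = \int_0^h f'(x+u)\,du$, and the inductive step follows from the factorization $\Delta_h^{r+1} = \Delta_h \circ \Delta_h^r$ by differencing the inner integrand once more in a fresh variable. Once the representation is in place, I would bound the integrand by $\normi{D^r f}$ and the integration domain by its volume $h^r$, obtaining $\abs{\Delta_h^r f(x)} \leq h^r \normi{D^r f}$; taking the supremum over $0 < h < t$ replaces $h^r$ by $t^r$ and yields \eqref{eq:mos_cd}. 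The only delicate point is setting up the induction cleanly and keeping track of which variable is being differenced at each stage; the remaining estimates are routine.
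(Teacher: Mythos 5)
Your three arguments are all correct, but note first that the paper does not prove this lemma at all: it states the three properties as classical facts and simply cites the standard references (Zygmund, Timan), so there is no internal proof to compare against. Your treatment of \eqref{eq:mos_triangle} and \eqref{eq:mos_cf} --- linearity of $\Delta_h^r$ plus the pointwise triangle inequality, and the crude bound via $\sum_{l=0}^{r}\binom{r}{l} = 2^r$ --- is exactly the standard textbook argument. For \eqref{eq:mos_cd}, your integral representation
\begin{equation*}
  \Delta_h^r f(x) = \int_0^h\cdots\int_0^h f^{(r)}\Bigl(x+\sum_{i=1}^{r}u_i\Bigr)\,du_1\cdots du_r
\end{equation*}
is the classical key step, and the induction via $\Delta_h^{r+1} = \Delta_h\circ\Delta_h^r$ (apply the inductive hypothesis to $g=\Delta_h f$, or equivalently difference the integrand and use the fundamental theorem of calculus in a fresh variable) goes through without difficulty; one sentence worth adding in a written-up version is that the restriction $x \in \ivcc{0}{1-rh}$ in the definition of $\omega_r$ guarantees every argument $x+\sum_i u_i \leq x + rh$ stays in $\ivcc{0}{1}$, so $f^{(r)}$ is never sampled outside its domain. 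The bound $\abs{\Delta_h^r f(x)} \leq h^r \normi{D^r f}$ and the supremum over $0<h<t$ then yield the claim. As for what each approach buys: the paper's bare citation is the economical choice, since these facts are genuinely standard and their proofs add nothing to the main results on the Schoenberg operator; your self-contained derivation makes the paper independent of the cited monographs at the cost of only the short induction you describe.
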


Note that for $k > r$ the spline space $\spacepp \subset \spacedf{r}$, because $\Sdk f \in \spacedf{k-1}$. 
Hence, using inequalities \eqref{eq:mos_triangle} -- \eqref{eq:mos_cd}, we obtain
\begin{equation}
  \label{eq:mos_inequality}
  \omega_r(f, t) \leq  2^r \normi{f-\Sdk f}+t^r\normi{D^r \Sdk f}.
\end{equation}
See also \cite{Butzer:1967,Johnen:1976} for the equivalence to the $K$-functional.
In the following we will estimate the last term with respect to the approximation error
$\normi{\Sdk f - f}$. To this end, we consider the minimal mesh length $\meshmin$,
\begin{equation*}
  \meshmin := \min\cset{(x_{j+1} - x_{j})}{j \in \setof{-k, \ldots, n-2}}.
\end{equation*}

\begin{lemma}
  \label{lem:bounded_diff_op}
  The differential operator $D: \spacepp \to \spaceppk{k-1}$ is bounded
  with $\normop{D} \leq (2/\meshmin)d_k$, where $d_k > 0$ is a constant depending only on $k$.
\end{lemma}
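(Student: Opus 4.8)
The plan is to transfer the problem from the degree-$k$ B-spline representation of a spline to the degree-$(k-1)$ representation of its derivative, and then to control the resulting sup-norm by combining the partition-of-unity property with the $L^\infty$-stability of the B-spline basis.

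First I would fix $s \in \spacepp$, written as $s = \sum_{j=-k}^{n-1} c_j \bspk{j}$, and differentiate term by term using the classical derivative formula for the normalized B-splines,
\begin{equation*}
  \bspk{j}'(x) = k\left(\frac{\bspkm{j}{1}(x)}{x_{j+k} - x_j} - \frac{\bspkm{j+1}{1}(x)}{x_{j+k+1} - x_{j+1}}\right).
\end{equation*}
Summing over $j$ and collecting the coefficient of each $\bspkm{m}{1}$ (an Abel-type rearrangement, with the degenerate boundary terms handled separately) yields
\begin{equation*}
  Ds = \sum_{m} d_m\, \bspkm{m}{1}, \qquad d_m = k\,\frac{c_m - c_{m-1}}{x_{m+k} - x_m}.
\end{equation*}
This already shows $Ds \in \spaceppk{k-1}$, and exhibits the new coefficients as scaled first differences of the old ones.

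Next I would bound the sup norm of $Ds$. Since the degree-$(k-1)$ B-splines again form a partition of unity, the same argument as in Theorem~\ref{thm:bounded} gives $\normi{Ds} \leq \max_m \abs{d_m}$. For each index $m$ for which $\bspkm{m}{1}$ is nontrivial the support $\ivcc{x_m}{x_{m+k}}$ is nondegenerate, so this span contains at least one interval of positive length and hence $x_{m+k} - x_m \geq \meshmin$. Combining this with $\abs{c_m - c_{m-1}} \leq 2\max_i \abs{c_i}$ gives $\abs{d_m} \leq (2k/\meshmin)\max_i \abs{c_i}$, and therefore
\begin{equation*}
  \normi{Ds} \leq \frac{2k}{\meshmin}\,\max_i \abs{c_i}.
\end{equation*}

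The remaining and decisive step is to replace the coefficient bound $\max_i \abs{c_i}$ by a multiple of $\normi{s}$, and this is where I expect the main obstacle to lie. The easy upper estimate $\normi{s} \leq \max_i \abs{c_i}$ follows at once from the partition of unity, but here I need the reverse inequality $\max_i \abs{c_i} \leq \kappa_k \normi{s}$ with a constant $\kappa_k$ depending \emph{only} on the degree $k$ and not on the knot sequence $\Delta_n$. This is precisely the nontrivial knot-independent $L^\infty$-stability of the B-spline basis established by de Boor, and it is the genuine input of the proof. Invoking it and combining the estimates gives $\normi{Ds} \leq (2/\meshmin)(k\kappa_k)\normi{s}$ for every $s \in \spacepp$, so the operator $D$ is bounded and the claim holds with $d_k := k\kappa_k$, which depends only on $k$.
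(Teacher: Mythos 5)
Your proof is correct and follows essentially the same route as the paper's: the spline derivative formula (your knot-difference coefficients $k(c_m-c_{m-1})/(x_{m+k}-x_m)$ are identical to the paper's Greville-node form $(c_m-c_{m-1})/(\xi_m-\xi_{m-1})$, since $\xi_m-\xi_{m-1}=(x_{m+k}-x_m)/k$), then the partition of unity and nonnegativity of the degree-$(k-1)$ B-splines to bound the sup norm by the largest coefficient, and finally de Boor's knot-independent $L^\infty$-stability of the B-spline basis to pass from the coefficients back to $\normi{s}$. If anything, your treatment of the denominators is more careful than the paper's, which implicitly uses $\xi_j-\xi_{j-1}\geq\meshmin$ (not guaranteed near the stacked boundary knots, where only $x_{j+k}-x_j\geq\meshmin$, i.e.\ $\xi_j-\xi_{j-1}\geq\meshmin/k$, holds); your explicit factor $k$, absorbed into $d_k:=k\kappa_k$, fixes this while still proving the lemma as stated, since $d_k$ is only required to depend on $k$.
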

\begin{proof}
  Let $s \in \mathcal{S}(\Delta_n, k)$, $s(x) = \sum_{j=-k}^{n-1}c_j
  \bspk{j}{x}$, with $\norm{s}_\infty = 1$. According to \cite{Marsden:1970}, we can calculate the
  derivative by
  \begin{equation*}
    Ds(x) = \sum_{j=1-k}^{n-1}\frac{c_j - c_{j-1}}{\xi_{j} - \xi_{j-1}} \bspkm{j}{1}(x).
  \end{equation*}
  Then we obtain with the triangle inequality
  \begin{align*}
    \normi{D s} &=  \normi{\sum_{j=1-k}^{n-1}\frac{c_j - c_{j-1}}{\xi_{j} - \xi_{j-1}} \bspkm{j}{1}}\\
               &\leq  \frac{\normi{c} + \normi{c}}{\meshmin} \cdot \normi{ \sum_{j=1-k}^{n-1}\bspkm{j}{1}},
  \end{align*}
  where
  \begin{equation}
    \label{eq:norm_coeff}
    \normi{c} = \max\cset{\abs{c_j}}{j \in \setof{-k, \ldots, n-1}}.
  \end{equation}
  According to \cite{deBoor:1973}, there exists $d_k > 0$ depending only on $k$, such that
  \begin{equation}
    \label{eq:stability_spline}
    d_k^{-1} \norm{c}_\infty \leq \norm{\sum_{j=-k}^{n-1}c_j\bspk{j}}_\infty \leq \norm{c}_\infty.
   \end{equation}
   Rewriting the first inequality yields $\norm{c}_\infty \leq d_k$, because $\normi{s} = 1$.
   Now we use the partition of the unity \eqref{eq:partition_unity} to derive the estimate
  \begin{align*}
    \normi{D s} 
               &\leq \frac{2}{\meshmin}d_k.
  \end{align*}
  Taking the supremum of all $s \in \spacepp$ with $\normi{s} = 1$ yields the result.
\end{proof}

\begin{corollary}
  \label{cor:differential_op}
  For $l < k$, the differential operators $D^l: \spacepp \to \spaceppk{k-l}$ are bounded and 
  \begin{equation*}
    \normop{D^l} \leq (2/\meshmin)^{l}d_k. 
  \end{equation*}
\end{corollary}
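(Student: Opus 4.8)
The plan is to iterate the B-spline derivative formula already used in Lemma~\ref{lem:bounded_diff_op} directly at the level of coefficient sequences, rather than composing the operator-norm bound of that lemma $l$ times. First I would record, for $s = \sum_{j=-k}^{n-1} c_j \bspk{j} \in \spacepp$ with $\normi{s} = 1$, the $i$-th derivative in the degree-$(k-i)$ B-spline basis as
\begin{equation*}
  D^i s = \sum_j c^{[i]}_j\, \bspkm{j}{i},
\end{equation*}
where $c^{[0]} = c$ and the coefficients obey the recursion $c^{[i]}_j = (c^{[i-1]}_j - c^{[i-1]}_{j-1})/(\xi_j - \xi_{j-1})$, the nodes $\xi_j$ being the Greville nodes of degree $k-i+1$. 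This is nothing but the $l$-fold application of the formula of \citet{Marsden:1970}, since $D^{i-1}s \in \spaceppk{k-i+1}$.

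The key estimate is that the passage $c^{[i-1]} \mapsto c^{[i]}$ is a bounded map on coefficient sequences whose sup-norm bound is \emph{independent of the degree}. Using $\abs{c^{[i-1]}_j - c^{[i-1]}_{j-1}} \leq 2\normi{c^{[i-1]}}$ together with the lower bound $\meshmin$ on the relevant Greville-node gaps, exactly as in Lemma~\ref{lem:bounded_diff_op} but now applied at each intermediate degree, I would obtain
\begin{equation*}
  \normi{c^{[i]}} \leq \frac{2}{\meshmin}\,\normi{c^{[i-1]}}, \qquad i = 1,\ldots,l,
\end{equation*}
and chaining these $l$ inequalities yields $\normi{c^{[l]}} \leq (2/\meshmin)^{l}\,\normi{c}$.

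It then remains to convert between coefficient norms and sup-norms, and the whole point is to do this only at the two ends of the chain. At the start, the lower stability estimate in \eqref{eq:stability_spline} applied to $s$ gives $\normi{c} \leq d_k\,\normi{s} = d_k$. At the end, the upper estimate in \eqref{eq:stability_spline} — which is just the partition of unity \eqref{eq:partition_unity} and holds with constant $1$ in every degree — gives $\normi{D^l s} \leq \normi{c^{[l]}}$. Combining these,
\begin{equation*}
  \normi{D^l s} \leq \normi{c^{[l]}} \leq \left(\frac{2}{\meshmin}\right)^{l}\normi{c} \leq \left(\frac{2}{\meshmin}\right)^{l} d_k,
\end{equation*}
and taking the supremum over all $s \in \spacepp$ with $\normi{s} = 1$ gives the asserted bound on $\normop{D^l}$.

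The main obstacle is precisely to obtain a \emph{single} factor $d_k$ rather than a product of the degree-dependent stability constants. A naive induction writing $D^l = D \circ D^{l-1}$ and invoking Lemma~\ref{lem:bounded_diff_op} on each intermediate space $\spaceppk{k-i}$ would produce a bound of the form $(2/\meshmin)^{l}\,d_k\, d_{k-1}\cdots d_{k-l+1}$, because the lower stability constant is re-introduced every time one passes back to a sup-norm. Staying in coefficient space circumvents this: the divided-difference map has degree-independent norm $2/\meshmin$, so the lower estimate (carrying $d_k$) and the constant-free upper estimate are each used exactly once, at the two endpoints of the iteration.
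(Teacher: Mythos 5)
Your proof is correct, and it takes a genuinely different route from the one the paper implies. The paper states the corollary without proof, and the reading it invites---compose Lemma~\ref{lem:bounded_diff_op} with its analogues $D:\spaceppk{k-i}\to\spaceppk{k-i-1}$---does not actually yield the printed constant: each application of the lemma passes back through a sup-norm and re-invokes the lower stability estimate at the current degree, so one accrues a factor $d_{k-i}$ at every step and ends with $(2/\meshmin)^{l}\,d_k d_{k-1}\cdots d_{k-l+1}$ instead of $(2/\meshmin)^{l}d_k$. Your coefficient-level iteration is precisely what repairs this: the divided-difference map $c^{[i-1]}\mapsto c^{[i]}$ has sup-norm bound $2/\meshmin$ independently of the degree, the lower estimate in \eqref{eq:stability_spline} (cost $d_k$) is invoked exactly once at the top degree, and the upper estimate (cost $1$, from non-negativity of the B-splines and the partition of unity \eqref{eq:partition_unity}, valid in every degree) is invoked exactly once at the bottom degree, giving $\normi{D^l s}\leq\normi{c^{[l]}}\leq(2/\meshmin)^l\normi{c}\leq(2/\meshmin)^l d_k$ as claimed. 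So your argument proves the corollary as stated, whereas the naive induction only proves a weaker bound; your closing paragraph identifies this distinction correctly. One caveat, inherited from the paper rather than introduced by you: the estimate $\xi_j-\xi_{j-1}\geq\meshmin$ for consecutive Greville nodes (of degree $k-i+1$ these gaps equal $(x_{j+k-i+1}-x_j)/(k-i+1)$, an average of knot spacings, some of which vanish at the multiple endpoint knots) is exactly as questionable at the intermediate degrees as it is in the paper's own proof of Lemma~\ref{lem:bounded_diff_op}; if one repairs it there, the same repair propagates through your recursion, so the structure of your argument is unaffected.
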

\begin{remark}
  The asymptotic behaviour of the constant $d_k$ in Lemma~\ref{lem:bounded_diff_op} is already
  characterized quite well in the literature. C. de Boor conjectured that
  \begin{equation*}
    d_k \sim 2^k
  \end{equation*}
  holds for all $k > 0$. He showed in \cite{deBoor1990} with numerical computations that
  \begin{equation*}
    d_k \leq c \cdot 2^k
  \end{equation*}
  for some constant $c > 0$. 
  In \cite{lyche1987},  T. Lyche proved the lower bound
  \begin{equation*}
    2^{-3/2}\frac{k-1}{k} \cdot 2^k \leq d_k.
  \end{equation*}
  C. de Boor's conjecture was confirmed in the article \cite{scherer1999} of \citeauthor{scherer1999}
  up to a polynomial factor. 
  There the authors showed that the inequality
  \begin{equation*}
    d_k \leq k \cdot 2^k
  \end{equation*}
  holds for all $k > 0$. In our interest is the relation $d_k \to \infty$ if $k$ tends to infinity.
\end{remark}

Now we are able to estimate $\normi{D^r\Sdk f}$ in terms of the approximation error $\normi{f- \Sdk f}$. 
\begin{theorem}
  \label{thm:lower_bound}
  Let $f \in \spacecf$, $r \geq 2$, $k > r$ and $0 < t \leq \frac1{r}$. Then there exists $M > 0$, such that
\begin{equation*}
  M \cdot \omega_r(f,t) \leq \normi{f - \Sdk f}.
\end{equation*}
\end{theorem}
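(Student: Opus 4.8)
The plan is to control the term $\normi{D^r \Sdk f}$ appearing in inequality~\eqref{eq:mos_inequality} by the approximation error $\normi{f - \Sdk f}$, and then to absorb both contributions into a single constant. The starting observation is that the operator $L$ from Theorem~\ref{thm:iterates} reproduces affine functions, so that $D^r L g = 0$ for every $g \in \spacecf$ whenever $r \geq 2$, and that $L$ annihilates any function vanishing at the endpoints $0$ and $1$. Since the Schoenberg operator interpolates at the endpoints, the residual $g := f - \Sdk f$ satisfies $g(0) = g(1) = 0$, hence $Lg = 0$.

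First I would write a telescoping identity. Using $\Sdkp{m+1} - \Sdkp{m} = \Sdkp{m}(\Sdk - I)$ and summing, the partial sums telescope to $\Sdkp{M+1} - \Sdk$; letting $M \to \infty$ and invoking $\Sdkp{M+1} \to L$ from Theorem~\ref{thm:iterates} gives
\begin{equation*}
  \Sdk f - L f = \sum_{m=1}^{\infty} \Sdkp{m}\bigl(f - \Sdk f\bigr),
\end{equation*}
where the series converges in $\normi{\cdot}$. Applying $D^r$ and using $D^r L f = 0$ turns the left-hand side into $D^r \Sdk f$. Because each summand lies in the finite-dimensional spline space $\spacepp$ and $D^r$ is continuous there, I may exchange $D^r$ with the sum.

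Next I would estimate each term. Since $L(f - \Sdk f) = 0$, I have $\Sdkp{m}(f - \Sdk f) = (\Sdkp{m} - L)(f - \Sdk f)$, so Theorem~\ref{thm:iterates} yields $\normi{\Sdkp{m}(f - \Sdk f)} \leq C \gamma_{\Delta_n, k}^m \normi{f - \Sdk f}$. Combining this with the bound $\normop{D^r} \leq (2/\meshmin)^r d_k$ from Corollary~\ref{cor:differential_op} and summing the resulting geometric series (which converges because $\gamma_{\Delta_n, k} < 1$) produces an estimate of the form $\normi{D^r \Sdk f} \leq K \normi{f - \Sdk f}$ with $K = (2/\meshmin)^r d_k \, C \, \gamma_{\Delta_n, k}/(1 - \gamma_{\Delta_n, k})$.

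Finally, substituting this into \eqref{eq:mos_inequality} and using $t^r \leq r^{-r}$ gives $\omega_r(f, t) \leq (2^r + r^{-r} K)\normi{f - \Sdk f}$, so the claim holds with $M = (2^r + r^{-r} K)^{-1} > 0$. The main obstacle is the combination carried out in the third paragraph: recognizing that the endpoint interpolation forces $L(f - \Sdk f) = 0$, which is precisely what upgrades the merely qualitative convergence $\Sdkp{m} \to L$ into the geometric decay needed to sum the series and to survive the blow-up factor $(2/\meshmin)^r d_k$ arising from differentiating splines.
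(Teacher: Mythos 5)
Your proposal is correct and follows essentially the same route as the paper's proof: telescoping the iterates, inserting $L$ to invoke the geometric decay $\normop{\Sdkp{m}-L}\leq C\gamma_{\Delta_n,k}^m$ from Theorem~\ref{thm:iterates}, bounding $D^r$ via Corollary~\ref{cor:differential_op}, and substituting into \eqref{eq:mos_inequality}. Your justification is in places slightly more careful than the paper's (explicit convergence of the telescoping series and exchange of $D^r$ with the sum in the finite-dimensional spline space, and the observation $L(f-\Sdk f)=0$ in place of the paper's insertion of $-L+L$ killed by $D^rL=0$), but these are cosmetic variations of the identical argument.
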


\begin{proof}
  We derive
  \begin{align*}
    \normi{D^r \Sdk f} &=  \normi{D^r \Sdk f- D^r \Sdkp{2} f + D^r \Sdkp{2} f -D^r \Sdkp{3} f + \ldots}\\
    &\leq \sum_{m=1}^\infty \normi{D^r \Sdkpm (f-\Sdk f)}\\
    &\leq \normi{f - \Sdk f} \sum_{m=1}^\infty \normop{D^r\Sdkp{m}}\\
    &= \normi{f - \Sdk f} \sum_{m=1}^\infty \normop{D^r(\Sdkp{m} - L + L)}\\
    &=  \normi{f - \Sdk f} \sum_{m=1}^\infty \normop{D^r(\Sdkp{m} - L)},\\
    \intertext{as $D^r$ annihilates linear functions and therefore,
      $D^rL = 0$. Then we obtain using Theorem \ref{thm:iterates}
      and Corollary \ref{cor:differential_op}}
    \norm{D^r \Sdk f} &\leq \normi{f - \Sdk f} \normop{D^r}\sum_{m=1}^\infty \normop{\Sdkp{m} - L}\\
    &\leq \normi{f - \Sdk f} \normop{D^r}\sum_{m=1}^\infty C\gamma_{{\Delta_n}, k}^m\\
    &\leq \normi{f - \Sdk f} \normop{D^r} \frac{C\gamma_{{\Delta_n}, k}}{1-\gamma_{{\Delta_n}, k}}\\
    &\leq \frac{2^r\gamma_{{\Delta_n}, k} d_k C}{\meshmin^r(1-\gamma_{{\Delta_n}, k})}\, \normi{f-\Sdk f}.
  \end{align*}
  As $C \leq 1/\gamma_{{\Delta_n},k}$, we get
  \begin{equation*}
    \normi{D^r \Sdk f} \leq \frac{2^r d_k}{\meshmin^r(1-\gamma_{{\Delta_n}, k})}\, \normi{f-\Sdk f}.
  \end{equation*}
  Applying inequality \eqref{eq:mos_inequality} for $0 < t \leq \frac1{r}$ yields
  \begin{equation*}
    \omega_r(f,t) \leq 2^r\left(1 + \frac{d_k}{\meshmin^r(1-\gamma_{{\Delta_n}, k})}t^r \right) \cdot \normi{f-\Sdk f}.
  \end{equation*}
\end{proof}

\begin{corollary}
  \label{cor:lower_bound}
  For all $f \in \spacecf$ and $r \geq 2$, the approximation error cannot be better than
  \begin{equation*}
    \frac1{2^{r + 1}}\omega_r(f, \delta) \leq \normi{f-\Sdk f},
  \end{equation*}
  where
  \begin{equation*}
    \delta = \meshmin\cdot\left(\frac{1-\gamma_{{\Delta_n}, k}}{d_k}\right)^{1/r}
  \end{equation*}
  given a fixed grid $\Delta_n$ and the degree $k$ of the spline approximation. For $n \to \infty$ and
 $k \to \infty$, we have $\delta \to 0$ as $\meshmin \to 0$ and $d_k \to \infty$ respectively.
\end{corollary}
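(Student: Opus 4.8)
The plan is to read the Corollary off the very last inequality established in the proof of Theorem~\ref{thm:lower_bound}, namely
\begin{equation*}
  \omega_r(f,t) \leq 2^r\left(1 + \frac{d_k}{\meshmin^r(1-\gamma_{{\Delta_n}, k})}\,t^r \right) \normi{f-\Sdk f},
\end{equation*}
which is valid for every admissible $t \in \ivoc{0}{\frac1r}$. The only idea involved is that the free parameter $t$ may be chosen so as to turn the bracketed factor into a clean universal constant: selecting $t=\delta$ with $\delta$ as in the statement makes the second summand inside the bracket equal to exactly $1$, so that the whole factor becomes $2^{r}(1+1)=2^{r+1}$.

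Concretely, I would first verify the defining identity. From $\delta = \meshmin\left(\frac{1-\gamma_{{\Delta_n}, k}}{d_k}\right)^{1/r}$ one gets $\delta^r = \meshmin^r\,\frac{1-\gamma_{{\Delta_n}, k}}{d_k}$, whence
\begin{equation*}
  \frac{d_k}{\meshmin^r(1-\gamma_{{\Delta_n}, k})}\,\delta^r = 1.
\end{equation*}
Substituting $t=\delta$ into the displayed inequality collapses the bracket to $2^{r+1}$, giving $\omega_r(f,\delta)\leq 2^{r+1}\normi{f-\Sdk f}$; dividing by $2^{r+1}$ produces the asserted bound $\tfrac{1}{2^{r+1}}\omega_r(f,\delta)\leq\normi{f-\Sdk f}$. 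This part of the argument is a pure substitution and requires no further estimate.

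The one point that genuinely needs care — and which I expect to be the main (indeed the only) obstacle — is the \emph{admissibility} of the chosen scale: the modulus $\omega_r(f,\cdot)$ is defined only on $\ivoc{0}{\frac1r}$, so the substitution $t=\delta$ is legitimate only after checking $0<\delta\leq\frac1r$. Since $1-\gamma_{{\Delta_n}, k}\in\ivoo{0}{1}$ and $\meshmin\leq 1$, while the lower bound $d_k\geq 2^{-3/2}\frac{k-1}{k}2^k$ from the preceding remark forces $d_k>1$ for the relevant degrees $k>r\geq 2$, one obtains $\delta<\meshmin\leq 1$. For sufficiently fine partitions or sufficiently high degree — precisely the regime in which the Corollary carries content — one has $\meshmin$ small (indeed $\meshmin\leq 1/n$), so $\delta\leq\frac1r$ holds and the substitution above applies verbatim.

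Finally, the asymptotic claim is read directly off the closed form of $\delta$, which is the product of $\meshmin$ with the strictly positive factor $\left(\frac{1-\gamma_{{\Delta_n}, k}}{d_k}\right)^{1/r}$. Refining the grid ($n\to\infty$, hence $\meshmin\to 0$) drives the first factor, and with it $\delta$, to $0$; raising the degree ($k\to\infty$, hence $d_k\to\infty$ by the remark) sends the second factor, and again $\delta$, to $0$, the term $(1-\gamma_{{\Delta_n}, k})^{1/r}\leq 1$ staying bounded. This yields $\delta\to 0$ in both limits and completes the proof.
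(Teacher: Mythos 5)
Your proposal matches the paper's (implicit) argument exactly: the paper states this corollary without a separate proof, and the intended derivation is precisely your substitution $t = \delta$ into the final inequality of the proof of Theorem~\ref{thm:lower_bound}, which turns the bracketed factor into $2$ and yields the constant $2^{r+1}$. Your additional check that $\delta \leq \frac{1}{r}$ (needed for $\omega_r(f,\delta)$ to be defined at all) addresses a point the paper silently glosses over, so your write-up is, if anything, more careful than the original.
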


\begin{remark}
  Note that our result can also be generalized to the Ditzian-Totik modulus of smoothness, as
  a similar equivalence to a weighted K-functional holds.
\end{remark}

\begin{corollary}
   For $0 < t \leq \frac1{2}$ and $k > 2$, we obtain the equivalence 
  \begin{equation*}
    \omega_2(f, t) \sim \normi{f - \Sdk f}
  \end{equation*}
  in the sense that there exist constants $M_1, M_2 > 0$ independent of $f$ and only depending on $\Delta_n$ and $k$ such that
  \begin{equation*}
    M_1 \cdot \omega_r(f, t) \leq \normi{f - \Sdk f} \leq M_2 \cdot \omega_r(f, t).
  \end{equation*}
\end{corollary}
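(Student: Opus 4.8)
The plan is to prove the two inequalities separately, because the claimed equivalence combines the new lower bound of this paper with the classical direct approximation result; the real work has already been done in Theorem~\ref{thm:lower_bound}, and only the upper direction requires a short additional argument.

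For the left inequality I would simply specialize Theorem~\ref{thm:lower_bound} to $r = 2$. That theorem gives
\begin{equation*}
  \omega_2(f, t) \leq 2^2\left(1 + \frac{d_k}{\meshmin^2(1-\gamma_{{\Delta_n}, k})}\,t^2\right)\normi{f - \Sdk f},
\end{equation*}
so the lower bound holds with
\begin{equation*}
  M_1 = \left(4\left(1 + \frac{d_k}{\meshmin^2(1-\gamma_{{\Delta_n}, k})}\,t^2\right)\right)^{-1},
\end{equation*}
which depends only on $\Delta_n$, $k$ and $t$, but not on $f$. Nothing further is needed here.

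For the right inequality I would invoke the quantitative direct inequality for the Schoenberg operator from \cite{Beutel:2002}. For a fixed grid and degree this yields a constant $c = c(\Delta_n, k) > 0$, which we may assume to satisfy $c \leq \tfrac12$ by monotonicity of the modulus, together with a bound of the form $\normi{f - \Sdk f} \leq C\,\omega_2(f, c)$ with $C$ independent of $f$. The only remaining task is to replace the fixed argument $c$ by the prescribed $t \in \ivoc{0}{\frac12}$. This is achieved by the scaling property of the second order modulus, $\omega_2(f, \lambda t) \leq (1 + \lambda)^2\,\omega_2(f, t)$ for $\lambda > 0$, applied with $\lambda = c/t$, which gives the upper bound with $M_2 = C\,(1 + c/t)^2$, again depending only on $\Delta_n$, $k$ and $t$. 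Combining the two estimates yields the asserted equivalence.

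I expect the main obstacle to lie entirely in the upper direction, namely in reconciling the cited direct inequality with the classical modulus $\omega_2(f, t)$. The literature most naturally phrases such direct estimates in terms of the mesh size or the Ditzian-Totik modulus, so one must take care to recast the bound against $\omega_2(f, \cdot)$ at the grid-dependent argument $c$, ensure that this argument can be reduced to the admissible range $\ivoc{0}{\frac12}$ without losing the $f$-independence of the constant, and then transfer it to an arbitrary $t$ via the scaling and monotonicity properties of $\omega_2$. The lower direction, by contrast, is immediate from Theorem~\ref{thm:lower_bound} and constitutes the genuinely new contribution.
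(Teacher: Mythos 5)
Your proposal is correct and follows essentially the same route as the paper: the lower estimate is Theorem~\ref{thm:lower_bound} (equivalently Corollary~\ref{cor:lower_bound}) specialized to $r=2$, and the upper estimate is the direct inequality $\normi{f - \Sdk f} \leq \tfrac{3}{2}\,\omega_2\bigl(f, \sqrt{\min\{1/(2k),\,(k+1)\meshmax^2/12\}}\bigr)$ from Theorem~6 of \cite{Beutel:2002}. The one place you go beyond the paper is the transfer from the fixed, grid-dependent argument $c$ to an arbitrary $t \in \ivoc{0}{\frac12}$ via the scaling property $\omega_2(f,\lambda t) \leq (1+\lambda)^2\omega_2(f,t)$; the paper's proof silently omits this step and simply cites the two inequalities at their native arguments. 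Your extra care exposes a defect in the statement rather than in your argument: the resulting $M_2 = C(1+c/t)^2$ necessarily depends on $t$, and this dependence cannot be removed, since for fixed nonlinear $f$ one has $\normi{f - \Sdk f} > 0$ while $\omega_2(f,t) \to 0$ as $t \to 0^+$, so no $t$-independent $M_2$ can satisfy $\normi{f - \Sdk f} \leq M_2\,\omega_2(f,t)$ on all of $\ivoc{0}{\frac12}$. (Your $M_1$, by contrast, can be made $t$-independent by bounding $t^2 \leq \frac14$ in the factor $1 + d_k t^2/(\meshmin^2(1-\gamma_{\Delta_n,k}))$.) So the corollary's claim that both constants depend only on $\Delta_n$ and $k$ is literally attainable only for the lower inequality, or for $t$ bounded away from $0$; your proof establishes exactly what is provable, and does so more explicitly than the paper itself.
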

\begin{proof}
  We apply Corollary~\ref{cor:lower_bound} to get the lower inequality
  \begin{equation*}
    \frac{1}{8} \cdot  \omega_2(f, \sqrt{\frac{(1-\gamma_{{\Delta_n}, k})\cdot \meshmin^2}{d_k}}) \leq \normi{f-\Sdk f}.
  \end{equation*}  
  We use the inequality
  \begin{equation*}
    \normi{f - \Sdk f} \leq \frac{3}{2}\cdot \omega_2(f, 
       \sqrt{ \min\left\{ \frac{1}{2k}, \frac{(k+1)\cdot \meshmax^2}{12} \right\} })
  \end{equation*}
  from \cite{Beutel:2002}, Theorem 6, 
  to obtain the upper bound. Here
  \begin{equation*}
    \meshmax := \max\cset{(x_{j+1} - x_j)}{j \in \setof{-k,\ldots,n-1}}.
  \end{equation*}  
\end{proof}

Finally, there is still one open question to answer. By definition of
the constants, we have $d_k \to \infty$ for $k \to \infty$ and
$\meshmin \to 0$ for $n \to \infty$. The question is whether the second largest eigenvalues of the
operator can speed up the convergence in Corollary~\ref{cor:lower_bound}. 
As far as we know, the eigenvalues and eigenfunctions of the Schoenberg operator are still unknown. 
We conclude the article with the following conjecture that characterizes the behavior of the
second largest eigenvalue of the Schoenberg operator.

\begin{conjecture}
  Let $k>0$ be fixed. Then
  \begin{equation*}
    \gamma_{\Delta_n,k} \to 1,\quad\text{for } n \to \infty.
  \end{equation*}
  Let $n>0$ be fixed. Then
  \begin{equation*}
    \gamma_{\Delta_n,k} \to 1,\quad\text{for } k \to \infty.
  \end{equation*}
\end{conjecture}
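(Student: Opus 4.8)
The plan is to reformulate $\gamma_{\Delta_n,k}$ as the Perron root of an explicit nonnegative matrix and then bound it from below by a single well-chosen test vector. Recall from the analysis of $\sigma(\Sdk)$ that the nonzero eigenvalues of $\Sdk$ are exactly those of the collocation matrix $N=(\bspk{j}(\xi_{i,k}))_{i,j}$ indexed by the Greville nodes. Since $\xi_{-k,k}=0$ and $\xi_{n-1,k}=1$, while the interpolation property $\Sdk f(0)=f(0)$, $\Sdk f(1)=f(1)$ forces $\bspk{j}(0)=\delta_{j,-k}$ and $\bspk{j}(1)=\delta_{j,n-1}$, the two rows of $N$ belonging to the endpoints are standard basis vectors. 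Thus, after separating these two \emph{absorbing} endpoints, $N$ is block lower-triangular with a $2\times 2$ identity block (carrying the double eigenvalue $1$, whose eigenvectors are the coefficient vectors of $1$ and $x$) and an interior block $Q=(\bspk{j}(\xi_{i,k}))_{i,j\in\mathcal I}$ running over the interior Greville nodes $\xi_{i,k}\in\ivoo{0}{1}$, indexed by $\mathcal I$. Consequently $\sigma(\Sdk)\setminus\setof{1}=\setof{0}\cup\sigma(Q)$, and since $\gamma_{\Delta_n,k}<1$ is already known, $\gamma_{\Delta_n,k}=\rho(Q)$, the Perron root of the substochastic, entrywise nonnegative matrix $Q$.

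First I would bound $\rho(Q)$ from below by the Collatz--Wielandt inequality $\rho(Q)\ge\min_i (Qv)_i/v_i$, which holds for any positive vector $v$ (for nonnegative $Q$ no irreducibility is needed: $Qv\ge c\,v$ implies $Q^m v\ge c^m v$, whence $\rho(Q)\ge c$). I choose $v_i=\phi(\xi_{i,k})$ with $\phi(x)=x(1-x)$, positive on the interior nodes and vanishing at both endpoints. Because $\phi(0)=\phi(1)=0$, the two deleted endpoint columns contribute nothing, so $(Qv)_i=\Sdk\phi(\xi_{i,k})$ exactly. Using that $\Sdk$ reproduces $1$ and $x$, one gets $\Sdk\phi=\phi-V$ with the nonnegative variance function $V(x)=\sum_{j}(\xi_{j,k}-x)^2\,\bspk{j}(x)=\Sdk g(x)-g(x)$, $g(x)=x^2$. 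Therefore
\[
  \gamma_{\Delta_n,k}=\rho(Q)\ge\min_{i\in\mathcal I}\frac{\Sdk\phi(\xi_{i,k})}{\phi(\xi_{i,k})}=1-\max_{i\in\mathcal I}\frac{V(\xi_{i,k})}{\xi_{i,k}(1-\xi_{i,k})}.
\]
Since $\gamma_{\Delta_n,k}\le 1$ always, both limits in the conjecture reduce to showing that the weighted variance ratio $V(x)/(x(1-x))$ tends to $0$ uniformly over the interior Greville nodes, as $n\to\infty$ for fixed $k$ and as $k\to\infty$ for fixed $n$.

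The hard part will be exactly this uniform estimate, whose difficulty is concentrated near the endpoints, where numerator and denominator both vanish. In the bulk $V(x)$ is of the order of the squared local mesh width while $x(1-x)$ is bounded below, so the ratio is $O(\meshmax^2)$ (resp.\ $O(1/k)$); the delicate regime is $\xi_{i,k}$ close to $0$ or $1$, where one must show that the variance collapses at least as fast as the linear factor $\xi_{i,k}$. For the innermost node $\xi_{1-k,k}$, which lies at distance $O(\meshmax/k)$ from the boundary, the active B-splines have Greville nodes spread over a window of width $O(\meshmax)$, so $V=O(\meshmax^2)$ and the ratio is $O(k\,\meshmax)\to 0$; a uniform version of this local estimate across all near-boundary nodes is precisely what the argument requires. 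This is strongly supported by the Bernstein benchmark $n=1$, where $\Sdk=B_k$ and $V(x)=x(1-x)/k$ holds identically, so the bound becomes the exact value $\gamma_{\Delta_n,k}=1-1/k$, recovering the classical second Bernstein eigenvalue; this shows both that the method is tight and that the $k\to\infty$ case hinges on an estimate of the form $V(x)\le C(n)\,x(1-x)/k$. I would therefore devote the main effort to proving such a weighted, boundary-aware bound on $V$ --- exploiting the locality of the B-spline supports (together with the total positivity of the collocation matrix, see \cite{schumaker2007}) and the explicit behaviour of the Greville nodes near the multiple knots --- after which both convergence statements, and even a rate such as $\gamma_{\Delta_n,k}\ge 1-O(k\,\meshmax)$, follow immediately from the displayed inequality.
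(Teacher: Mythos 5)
This statement is not proved in the paper at all: it is posed there as an open conjecture, so there is no proof of record to compare against, and the bar your proposal must clear is a complete, self-contained argument. It does not clear it. The reduction you set up is correct and genuinely valuable as far as it goes: the passage from $\sigma(\Sdk)$ to the collocation matrix $N$, the block-triangular split at the two endpoint rows, the Collatz--Wielandt bound with the test vector $v_i=\xi_{i,k}(1-\xi_{i,k})$, and the identity $\Sdk\phi=\phi-V$ with $V=\Sdk g-g$, $g(x)=x^2$, all check out, and the Bernstein case $n=1$ confirms tightness ($\gamma_{\Delta_n,k}=1-1/k$ there). But after this reduction the conjecture has merely been restated: both limits now hinge on the uniform estimate $\max_i V(\xi_{i,k})/\bigl(\xi_{i,k}(1-\xi_{i,k})\bigr)\to 0$, which you explicitly defer as ``the hard part''. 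That estimate is exactly where the difficulty of the conjecture lives --- near the stacked knots at $0$ and $1$ one must show that the variance $V$ vanishes at least linearly at the endpoints, with a vanishing constant --- and your sketch (locality of supports, total positivity, one computation at the innermost node $\xi_{1-k,k}$) is not developed to a point where it could be checked. Moreover, for the fixed-$k$ limit your bulk bound $V=O(\meshmax^2)$ and your boundary bound both implicitly require $\meshmax\to 0$ and local mesh-ratio control that the conjecture, as stated, does not grant: for partition sequences with, say, $x_1\ll x_{k+1}^2$, your own near-boundary estimate $k\,x_{k+1}^2/x_1$ does not tend to zero. So what you have is a promising reduction of an open problem to another open estimate, not a proof.

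There is also a gap earlier, at the identity $\gamma_{\Delta_n,k}=\rho(Q)$. Writing $\sigma(\Sdk)\setminus\setof{1}=\setof{0}\cup\sigma(Q)$ silently assumes $1\notin\sigma(Q)$, and the known fact $\gamma_{\Delta_n,k}<1$ does not deliver this, because $\gamma_{\Delta_n,k}$ is defined by deleting the point $1$ from the spectrum: if $Q$ had eigenvalue $1$, then $\rho(Q)=1$ while $\gamma_{\Delta_n,k}$ could still be small. This is not hypothetical: for $k=1$ the Greville nodes are the knots and the hat functions are cardinal there, so $N$ is the identity matrix, $Q=I$ and $\rho(Q)=1$, while $S_{\Delta_n,1}$ is piecewise-linear interpolation with $\sigma(S_{\Delta_n,1})=\setof{0,1}$ and hence $\gamma_{\Delta_n,1}=0$ for every $n$. (Incidentally, this shows the conjecture as printed, for every fixed $k>0$, is false at $k=1$; any correct treatment must assume $k\ge 2$.) For $k\ge 2$ your identity is rescuable: if $Qw=w$, padding $w$ with zeros in the two endpoint coordinates gives a nonzero spline fixed by $\Sdk$ whose first and last B-spline coefficients vanish, contradicting $\ker(\Sdk-I)=\mathrm{span}(1,x)$, since coefficients of the form $a+b\,\xi_{j,k}$ cannot vanish at both $j=-k$ and $j=n-1$ unless $a=b=0$. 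But this argument, and the hypothesis $k\ge 2$ it needs, must appear explicitly; as written, the very first displayed equality of your proposal fails in a case the conjecture claims to cover.
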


\begin{acknowledgements}
  The work of the first and the last author was partially supported by
  the DAAD grant ``Conformal monogenic frames for image analysis'',
  Project Id 54367931 respectively 57030516, PPP Programme, awarded to
  Brigitte Forster.  The work of the second author was partially
  supported by Portuguese funds through the CIDMA - Center for
  Research and Development in Mathematics and Applications, and the
  Portuguese Foundation for Science and Technology (FCT-Funda\c c\~ao
  para a Ci\^encia e a Tecnologia), within project
  PEst-OE/MAT/UI4106/2014.
\end{acknowledgements}

\bibliographystyle{plainnat}
\bibliography{references}
\nocite{*}
\end{document}